\newtheorem{theor}{Theorem}[section]
\newtheorem{remar}[theor]{Remark}
\newtheorem{corol}[theor]{Corollary}
\newtheorem{Exa}[theor]{Example}
\newcommand{\tyz}{Tian-Yau-Zelditch }
\newcommand{\Hm}{\mathcal{H}_m}
\newcommand{\R}{\mathbb{R}}
\newcommand{\C}{\mathbb{C}}
\newcommand{\N}{\mathbb{N}}
\newcommand{\K}{K\"{a}hler\ }
\newcommand{\Ric}{\textrm{Ric}}
\begin{document}
\title[The Simanca metric admits a regular quantization]{The Simanca metric admits a regular quantization}

\author{Francesco Cannas Aghedu, Andrea Loi}
\address{Dipartimento di Matematica e Informatica, Universit\`a di Cagliari\\Via Ospedale 72, 09124 Cagliari (Italy)}
\email{fcannasaghedu@unica.it, loi@unica.it}

\thanks{The second author was  supported by Prin 2015 -- Real and Complex Manifolds; Geometry, Topology and Harmonic Analysis -- Italy  by INdAM. GNSAGA - Gruppo Nazionale per le Strutture Algebriche, Geometriche e le loro Applicazioni and by Fondazione di Sardegna and Regione Autonoma della Sardegna.}
\subjclass[2010]{53C55; 58C25;  58F06} 
\keywords{K\"ahler manifolds; TYZ asymptotic expansion; Radial metrics; Scalar flat metrics; Projectively induced metrics; Simanca metric; Berezin quantization.}

\begin{abstract}
Let $g_S$ be the Simanca metric on the blow-up $\tilde\C^2$ of $\C^2$ at the origin. We show that $(\tilde\C^2,g_S)$ admits a regular quantization. We use  this fact  to prove that all coefficients in the Tian-Yau-Zelditch expansion for the Simanca metric  vanish and that a dense subset of 
$(\tilde\C^2, g_S)$  admits a Berezin quantization.  
\end{abstract}
 
\maketitle
\tableofcontents
\newpage

\section{Introduction}
Let $M$ be a $n$-dimensional complex manifold endowed with a K\"ahler metric $g$. Assume that
there exists a holomorphic line bundle $L$ over $M$ such that $c_1(L) = [\omega]$, where $\omega$ is the K\"ahler form associated to $g$ and $c_1(L)$ denotes the first Chern class of $L$.  A necessary and sufficient condition for the existence of such an  $L$ is that $\omega$ is an integral \K form.

Let $m \geq 1$ be an integer and let $h_m$ be an Hermitian metric on $L^m = L^{\otimes m}$ such that its Ricci curvature $\Ric(h_m) = m\omega$. Here $\Ric(h_m)$ is the two-form on $M$ whose local expression is given by
\[
\Ric(h_m)= -\frac{i}{2\pi}\partial\bar{\partial}\log h_m(\sigma(x),\sigma(x))
\]
for a trivializing holomorphic section $\sigma : U \rightarrow L^m\setminus \{0\}$. In the quantum mechanics terminology
$L^m$ is called the \textit{prequantum line bundle}, the pair $(L^m, h_m)$ is called a \textit{geometric
quantization} of the K\"ahler manifold $(M,m\omega)$ and $\hbar= m^{-1}$ plays the role of Planck's constant
(see e.g. \cite{AreLoi}). Consider the separable complex Hilbert space $\Hm$ consisting of global
holomorphic sections $s$ of $L^m$ such that
\[
\left\langle s,s \right\rangle_{h_m}= \int_{M} h_m(s(x),s(x))\frac{\omega^n}{n!} < \infty.
\]
Define
\[
\epsilon_{mg}(x)= \sum_{j=0}^{d_m}h_m(s_j(x),s_j(x)),
\]
where $s_j,\,j=0,\ldots,d_m (\dim \Hm = d_m+1\leq \infty)$ is an orthonormal basis of $(\Hm,\left\langle \cdot, \cdot \right\rangle_{h_m})$.
As suggested by the notation this function depends only on the metric $mg$ and not on
the orthonormal basis chosen or on the Hermitian metric $h_m$. Obviously if $M$ is compact $\Hm=H^0(L^m)$, where $H^0(L^m)$ is the (finite dimensional) space of global holomorphic section of $L^m$.

In the literature the function $\epsilon_{mg}$ was first introduced under the name of $\eta$-function by Rawnsley in \cite{Rawnsley} later renamed as $\theta$\textit{-function} in \cite{CahGut}. 
A metric $g$ on $M$ is called {\em balanced}  if  $\epsilon_g$ is a positive constant.
The definition of balanced metrics was originally given by Donaldson \cite{Donaldson}   in the case of compact polarized \K manifold $(M,g)$ and generalized in \cite{AreLoi3} to the noncompact case (see also \cite{Englis2}, \cite{GreLoi},  \cite{LZ12}). \\
A geometric quantization $(L^m,h_m)$ of a K\"ahler manifold $(M,\omega)$ is called a  \textit{regular quantization}\footnote{Regular quantizations were introduced and studied  in \cite{CahGut} in the context of quantization by  deformation of \K manifolds.} 
if $mg$ is balanced   for any (sufficiently large) natural number $m$.

Many authors (see, e.g. \cite{AreLoi} and  \cite{AreLoiZ} and references therein) have  tried to understand what kind of properties are enjoyed by  those  K\"ahler manifolds which admit a regular quantization. 
Here we recall the following two facts:
\begin{itemize}
\item
a K\"ahler metric  which admits a regular quantization is cscK (constant  scalar curvature K\"{a}hler) metric (see \cite{Loi4});
\item
a geometric quantization of a  homogeneous\footnote{A K\"ahler manifold $(M, g)$ is homogeneous if the group $\mathrm{Aut}(M) \cap \mathrm{Isom}(M,g)$ acts transitively on $M$, where $\mathrm{Aut}(M)$ denotes the group of holomorphic diffeomorphisms of $M$ and $\mathrm{Isom}(M,g)$ the isometry group of $(M,g)$.} and simply-connected  K\"ahler manifold is regular (see \cite{AreLoi} and \cite{LM15}  for a proof in the compact and noncompact case respectively). 
\end{itemize}

Therefore,  the  following question naturally arises: \textit{Is it true that a complete\footnote{The assumption of completeness is necessary otherwise one can construct regular quantizations on non-homogeneous \K manifolds obtained by deleting a measure zero set from  a homogeneous \K manifold (see \cite{LMZ17}).} K\"ahler manifold $(M,\omega)$ which admits a regular quantization is necessarily homogeneous (and simply-connected)\footnote{The simply-connected request is in brackets since one can prove that every homogeneous and projectively induced \K manifold is simply-connected (see \cite{DiScala}).}? }

In the compact case this question is still open and of great interest also because the \K manifolds involved are projectively algebraic. 

In this paper we give a negative answer to the question in the non-compact case  by considering the Simanca metric $g_S$ on the blow-up $\tilde\C^2$ of $\C^2$ at the origin. The Simanca metric is  a well-known and important example (both from mathematical and physical point of view)  of non homogeneous complete, zero constant scalar curvature metric (see Section \ref{section2} below for details).  \\

Our main result is then the  following:
\begin{theor}\label{Th1}
Let  $\tilde\C^2$ be the blow-up of $\C^2$ at the origin endowed with the Simanca metric $g_S$. 
Then $(\tilde\C^2, g_S)$ admits a regular quantization such that  $\epsilon_{mg_S}=m^2$.
\end{theor}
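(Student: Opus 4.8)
The plan is to realize the quantization explicitly and then to compute $\epsilon_{mg_S}$ by hand. First I would fix a holomorphic line bundle $L$ over $\tilde\C^2$ with $c_1(L)=[\omega_S]$ and an Hermitian metric $h$ with $\Ric(h)=\omega_S$. Away from the exceptional divisor $E$ one has the biholomorphism $\tilde\C^2\setminus E\cong\C^2\setminus\{0\}$, and in this chart $L^m$ and $h_m$ trivialize so that $h_m=e^{-m\Phi}$, where $\Phi=\Phi(|z|^2)$ is the radial Kähler potential of the Simanca metric recalled in Section \ref{section2}. In this trivialization a global holomorphic section of $L^m$ is represented by a holomorphic function on $\C^2\setminus\{0\}$, which by Hartogs' theorem extends to an entire function $\sum_\alpha c_\alpha z^\alpha$.

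Next I would produce an explicit orthonormal basis of $\Hm$. Since $g_S$ is $\U(2)$-invariant and $\Phi$ depends only on $x:=|z|^2$, the measure $e^{-m\Phi}\tfrac{\omega_S^2}{2!}$ is $\U(2)$-invariant, so the monomials $z^\alpha=z_1^{\alpha_1}z_2^{\alpha_2}$ are mutually orthogonal; the ones that actually lie in $\Hm$ form, after normalization, an orthonormal basis. Two conditions cut out these admissible $\alpha$: that $z^\alpha$ extend to a global holomorphic section of $L^m$ across $E$, and that $\|z^\alpha\|^2_{h_m}=\int_{\C^2\setminus\{0\}}|z^\alpha|^2e^{-m\Phi}\,\tfrac{\omega_S^2}{2!}$ be finite. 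Because $\Phi$ acquires a logarithmic singularity as $x\to0$ (the locus corresponding to $E$) and grows linearly as $x\to\infty$, integrability near $E$ imposes a lower bound $|\alpha|\ge k_0$ with $k_0=k_0(m)$, while the decay of $e^{-m\Phi}$ at infinity makes every larger degree integrable. Separating radial and angular variables via $|z_1|^2=xs$, $|z_2|^2=x(1-s)$ one finds $\|z^\alpha\|^2=\alpha_1!\,\alpha_2!\,I_{|\alpha|}$, where $I_k=\int_0^\infty x^{k}e^{-m\Phi(x)}\mu(x)\,dx$ is a degree-$k$ radial integral and $\mu$ is the explicit radial density of $\tfrac{\omega_S^2}{2!}$.

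Finally I would carry out the summation. Using the multinomial identity $\sum_{|\alpha|=k}\binom{k}{\alpha}|z^\alpha|^2=x^{k}$, each degree-$k$ block collapses and
\[
\epsilon_{mg_S}=e^{-m\Phi(x)}\sum_{k\ge k_0}\frac{x^{k}}{k!\,I_k}.
\]
I would then evaluate the moment integrals $I_k$ in closed form, substituting the Simanca potential (they reduce to Gamma/Beta-type expressions), and show that the power series sums to $m^2e^{m\Phi(x)}$, whence $\epsilon_{mg_S}\equiv m^2$; constancy is precisely the statement that $mg_S$ is balanced for every $m$, i.e.\ that the quantization is regular. The main obstacle is exactly this last step: computing $I_k$ explicitly and recognizing the series as $m^2e^{m\Phi}$. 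Here the special, non-elementary form of the Simanca potential is essential, and a convenient device is to avoid summing head-on, instead verifying $\tfrac{d}{dx}\epsilon_{mg_S}=0$ by exhibiting a telescoping recursion among the coefficients $(k!\,I_k)^{-1}$.
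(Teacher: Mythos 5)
Your proposal is correct and follows essentially the same route as the paper's own proof: the same trivialization with $h_m=e^{-m\Phi}=e^{-m|z|^2}/|z|^{2m}$, the same orthogonal monomial basis of $\mathcal{H}_m$ (where your undetermined threshold is exactly $k_0=m$, since both the extension condition across the exceptional divisor and integrability near it single out degree $\geq m$), and the same multinomial collapse followed by summation of an exponential series. The moment integrals you defer are precisely what the paper computes by elementary Gamma/Beta identities, yielding $\|z_1^jz_2^k\|^2_{h_m}=\frac{j!k!}{(j+k)!}\cdot\frac{(j+k-m)!}{m^{j+k-m+2}}$, after which the series sums directly to $m^2e^{m\Phi}$ and no telescoping device is needed.
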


It is worth pointing out that recently Bi-Feng-Tu  \cite{fbh} have constructed examples of regular quantizations on  Fock--Bargmann--Hartogs domains  in the complex Euclidean space equipped with a negative constant scalar curvature  \K metric. Thus, they  provide  a negative answer to the previous question in the non-compact case when the scalar curvature is negative. Another important difference between Bi-Feng-Tu example and the Simanca metric
is that in the first case the  quantization bundle is trivial and the \K metric has a global \K potential. Moreover,  the Simanca  metric has been a fundamental ingredient in the construction of cscK metrics on compact \K manifold via blow-up procedures (see \cite{AP09}). Thus we believe our Theorem \ref{Th1} could be used to built regular quantizations of non-homogeneous compact \K manifolds.

In order to obtain an interesting corollary of Theorem \ref{Th1} (see Corollary \ref{Cor2} below) 
we need to briefly recall  some important tools about  asympotic expansions of the epsilon function.
In the  case of a  compact  \K manifold $(M, \omega)$
Zelditch  \cite{Zelditch} proved that there exists a complete
asymptotic expansion in the $C^{\infty}$ category of epsilon  function:
\begin{equation}
\epsilon_{mg}(x) \sim \sum_{j=0}^{\infty}a_j(x)m^{n-j},
\label{TYZ}
\end{equation}
where $a_0(x)=1$ and $a_j(x),\, j=1,\ldots$ are smooth functions on $M$. 
The expansion \eqref{TYZ} is called \textit{Tian-Yau-Zelditch expansion} (TYZ in the sequel). Later on, Lu \cite{Lu}, by means of Tian's peak section method, proved that each of the coefficients $a_j(x)$ is a polynomial of the curvature and its covariant derivatives at $x$ of the metric $g$ which can be found by finitely many algebraic operations. In particular, he computed the first three coefficients.
The expression of the first two coefficients  is:
 \begin{equation}\label{coefflu}
\left\{\begin{array}
{l}
a_1(x)=\frac{1}{2}\rho\\
a_2(x)=\frac{1}{3}\Delta\rho
+\frac{1}{24}(|R|^2-4|{\rm Ric} |^2+3\rho ^2)\\
\end{array}\right.,
\end{equation}
where 
$\rho$, $R$, $\Ric$ denote respectively the scalar curvature,
the curvature tensor and the Ricci tensor of $(M, g)$.
The reader is also referred to \cite{Loi2} and \cite{Loi3} for a recursive formula for the coefficients $a_j$'s and an alternative computation of $a_j$ for $j \leq 3$ using Calabi's diastasis function (see also \cite{Xu} for a graph-theoretic interpretation of this recursive formula). 

It is  natural to study metrics with the \tyz coefficients being prescribed both in the compact that in the noncompact cases. For instance, the vanishing of this coefficients for large enough indexes  turns out to be related to some important problems in the theory of pseudoconvex manifolds (cf. \cite{lutian}). 
Furthermore, in  the noncompact case, one can find  in \cite{taubnut} a characterization of the flat metric as a Taub-NUT metric with $a_3=0$, while Z. Feng and Z. Tu \cite{fengtu} solve a conjecture formulated in \cite{zedda} by showing that  the complex hyperbolic space is the only Cartan-Hartogs domain where the coefficient  $a_2$ is constant. In \cite{LoiZe} A. Loi and M. Zedda prove that a locally hermitian symmetric space with vanishing $a_1$ and $a_2$ is flat. 

For the Simanca metric $(\tilde{\C}^2, g_S)$ in \cite{LoiZe} 
 the authors computed the $a_2$ coefficient  ($a_1=0$ since $g_S$ has vanishing scalar curvature).  Moreover, in \cite{LSZ} is proved that
 a projectively induced\footnote{A \K metric $g$ on a complex manifold $M$ is said to be \textit{projectively induced} if exists a holomorphic and isometric (i.e. K\"ahler) immersion of $(M,g)$ into the complex projective space $(\C P^N,g_{FS}), N \leq +\infty$, endowed with the Fubini-Study metric $g_{FS}$, the metric whose associated \K form is given in homogeneous coordinates by $\omega_{FS}= \frac{i}{2\pi}\partial\bar{\partial}\log(|Z_0|^2+ \cdots + |Z_N|^2)$.} radial  \K metric with  $a_1=a_3=0$  (or with  $a_1=a_2=0$) is  either the flat metric $g_0$ or the Simanca metric $g_S$.

\vskip 0.3cm 
Here, by using  Theorem \ref{Th1} we immediatly  obtain the following: 
\begin{corol}\label{Cor2}
All the coefficients $a_j(x)$, with $j \geq 1$, of  the TYZ expansion for the Simanca metric vanish. 
\end{corol}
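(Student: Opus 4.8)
The plan is to read off the vanishing directly from Theorem \ref{Th1} together with the uniqueness of the coefficients in an asymptotic expansion in powers of $m$. Since the Simanca metric lives on the blow-up $\tilde\C^2$, the complex dimension is $n=2$, so for $g_S$ the Tian-Yau-Zelditch expansion \eqref{TYZ} takes the form
\begin{equation*}
\epsilon_{mg_S}(x) \sim \sum_{j=0}^{\infty} a_j(x)\, m^{2-j} = m^2 + a_1(x)\,m + a_2(x) + a_3(x)\,m^{-1} + \cdots,
\end{equation*}
where we have used $a_0 \equiv 1$.

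First I would invoke Theorem \ref{Th1}, which gives the exact identity $\epsilon_{mg_S}(x) = m^2$ at every point $x$ and for every sufficiently large integer $m$. In particular, as a function of $m$ the epsilon function is \emph{itself} a polynomial of degree $2$, so it trivially possesses a (terminating) asymptotic expansion in powers of $m$. Substituting the exact value into the left-hand side of the displayed expansion and cancelling the common $m^2$ term yields
\begin{equation*}
0 \sim a_1(x)\,m + a_2(x) + a_3(x)\,m^{-1} + \cdots \qquad (m \to \infty).
\end{equation*}

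The final step is to apply the uniqueness of asymptotic expansions: the functions $\{m^{2-j}\}_{j\ge 0}$ are asymptotically independent as $m\to\infty$, so the right-hand side above can be asymptotic to the zero function only if each coefficient vanishes, that is $a_j(x)=0$ for all $j\ge 1$ and all $x\in\tilde\C^2$. Since by Lu's result the $a_j$ are the universal polynomials in the curvature and its covariant derivatives, the identification of these pointwise coefficients with the coefficients of the asymptotic expansion is consistent and basis-independent. I do not expect any genuine obstacle here: the entire content is carried by Theorem \ref{Th1}, and the corollary reduces to a one-line comparison of the exact polynomial $m^2$ with the formal series in \eqref{TYZ}; the only point worth a remark is to confirm that $n=2$ so that the leading term $a_0 m^n=m^2$ matches the exact value exactly, leaving nothing for the lower-order coefficients.
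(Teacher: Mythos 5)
Your proposal is correct and is essentially the paper's own argument: the paper deduces the corollary ``immediately'' from Theorem \ref{Th1} by exactly this comparison of the exact value $\epsilon_{mg_S}=m^2$ with the expansion \eqref{TYZ} for $n=2$ and invoking uniqueness of the coefficients. Nothing further is needed.
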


Theorem \ref{Th1} shows that $(\C^2, g_0)$ and $(\tilde{\C}^2, g_S)$ 
have the same epsilon functions both equal to $m^2$.
It could be interesting to find other examples of \K manifolds sharing  this property and, more generally, to analyze to what extent the TYZ coefficients determine the underlying \K manifold  (cf. \cite{AreLoiZhombal} for this last issue).\\

We also prove a result on Berezin's quantization on the dense subset  $\C^2\setminus \{0\}\subset \tilde{\C}^2$ equipped with the restriction of the  Simanca \K form $\omega_S$ associated to the Simanca metric  $g_S$. This is  expressed by the following corollary.
\begin{corol}\label{Cor1}
$(\C^2\setminus \{0\}, \omega_S)$ admits a Berezin quantization.
\end{corol}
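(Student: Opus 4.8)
The plan is to obtain the Berezin quantization on the dense open subset $\C^2\setminus\{0\}=\tilde\C^2\setminus E$ (with $E$ the exceptional divisor) directly from the regular quantization of $(\tilde\C^2,g_S)$ provided by Theorem \ref{Th1}. The guiding observation is that a Berezin quantization needs two things: that the Rawnsley $\epsilon$-function be constant, and that a global \K potential (equivalently a globally defined Calabi diastasis) be available, so that the reproducing kernel can be written as the exponential of the sesquianalytically continued potential. The first is exactly the content of Theorem \ref{Th1}; the second fails on $\tilde\C^2$ because $[\omega_S]=c_1(L)\neq 0$, but it holds on $\C^2\setminus\{0\}$, where $g_S$ is radial and carries the explicit global potential $\Phi_S(z)=F(|z_1|^2+|z_2|^2)$ of Section \ref{section2}. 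This is precisely why one must pass to the dense subset.

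First I would fix the holomorphic trivialization of $L^m$ over $\C^2\setminus\{0\}$ determined by $\Phi_S$, in which the Hermitian metric reads $h_m=e^{-m\Phi_S}$. Writing an orthonormal basis $s_j=f_j\,e_L$ of $\Hm$ in this frame realizes $\Hm$ isometrically as a weighted Bergman space of holomorphic functions on $\C^2\setminus\{0\}$, since $E$ is a null set for $\omega_S^2/2!$ and the two $L^2$-norms therefore agree. Its reproducing kernel is $K_m(x,\bar y)=\sum_j f_j(x)\overline{f_j(y)}$, and by construction
\[
\epsilon_{mg_S}(x)=e^{-m\Phi_S(x)}K_m(x,\bar x).
\]
Feeding in $\epsilon_{mg_S}=m^2$ from Theorem \ref{Th1} gives $K_m(x,\bar x)=m^2e^{m\Phi_S(x)}$, whence, by sesquianalytic continuation, $K_m(x,\bar y)=m^2e^{m\Phi_S(x,\bar y)}$.

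With these in hand the Berezin data assemble themselves. The coherent states are the normalized kernels, the covariant and contravariant symbols and the resolution of the identity are produced in the standard way from the constancy of $\epsilon_{mg_S}$, and the coherent-state overlap is governed by the global diastasis $D_S(x,y)=\Phi_S(x)+\Phi_S(y)-2\,\mathrm{Re}\,\Phi_S(x,\bar y)$ through
\[
\frac{|K_m(x,\bar y)|^2}{K_m(x,\bar x)\,K_m(y,\bar y)}=e^{-mD_S(x,y)}.
\]
This is the identity that underlies Berezin's symbol calculus, and it is available globally on $\C^2\setminus\{0\}$ exactly because $\Phi_S$, and hence $D_S$, is globally defined there.

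The main obstacle I anticipate lies not in the formal assembly above but in the two regularity verifications that make the calculus meaningful on a noncompact manifold: that $K_m$ is nowhere vanishing, so the coherent states $e_x$ are everywhere defined and nonzero, and that $D_S(x,y)$ is finite and strictly positive for $x\neq y$, so that the coherent states separate points and the coherent-state map is injective. Both should follow from the explicit radial form of $\Phi_S$ together with the kernel computation already performed in the proof of Theorem \ref{Th1}; checking them uniformly over all of $\C^2\setminus\{0\}$, rather than merely near a single point, is the delicate step, since it is exactly here that the obstruction to extending the potential across $E$ is confined.
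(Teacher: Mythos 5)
Your outline follows the same two-ingredient strategy as the paper---Theorem \ref{Th1} for the constancy of $\epsilon_{mg_S}$, plus a diastasis statement on $\C^2\setminus\{0\}$, fed into a criterion of Engli\v{s}/Loi--Mossa type (the paper's Theorem \ref{Th2})---but there is a genuine gap exactly where you defer to ``the delicate step''. What the criterion requires is that $e^{-D_{g_S}(x,y)}$ be globally defined on $(\C^2\setminus\{0\})\times(\C^2\setminus\{0\})$, that $e^{-D_{g_S}}\le 1$, and that equality hold iff $x=y$; you never verify any of this, and the verification you \emph{plan} to do is stated incorrectly. Indeed $D_S(x,y)$ is \emph{not} finite for all $x\neq y$: the sesquianalytic extension of $\Phi_S(z)=|z|^2+\log|z|^2$ is $\Phi_S(x,\bar y)=x\cdot\bar y+\log(x\cdot\bar y)$, which blows up whenever $x\cdot\bar y=x_1\bar y_1+x_2\bar y_2=0$ (e.g.\ $x=(1,0)$, $y=(0,1)$), so $D_S(x,y)=+\infty$ there. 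Only the exponentiated quantity survives,
\[
e^{-D_S(x,y)}=\frac{|x\cdot\bar y|^2\,e^{2\mathrm{Re}(x\cdot\bar y)}}{|x|^2|y|^2\,e^{|x|^2+|y|^2}},
\]
which is globally defined and smooth; the two required inequalities then reduce to Cauchy--Schwarz, $|x\cdot\bar y|^2\le|x|^2|y|^2$, together with $2\,\mathrm{Re}(x\cdot\bar y)\le|x|^2+|y|^2$ (i.e.\ $|x-y|^2\ge0$), with simultaneous equality iff $x=y$. This elementary computation is the actual content of Condition 2 and is what your proposal omits. (Your worry about $K_m$ being nowhere vanishing is also misplaced: on the diagonal $K_m(x,\bar x)=m^2e^{m\Phi_S(x)}>0$ is automatic from your own formula, while off the diagonal $K_m(x,\bar y)=m^2e^{m\,x\cdot\bar y}(x\cdot\bar y)^m$ genuinely \emph{does} vanish, and this is no obstruction.)

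The paper handles Condition 2 by a different, cleaner device: it exhibits the explicit injective K\"ahler immersion $\varphi:(\C^2\setminus\{0\},g_S)\to(\C P^\infty,g_{FS})$, $(z_1,z_2)\mapsto\left(z_1,z_2,\ldots,\sqrt{\tfrac{j+k}{j!k!}}\,z_1^jz_2^k,\ldots\right)$, and invokes Calabi's hereditary property of the diastasis, so that $e^{-D_{g_S}(x,y)}=e^{-D_{g_{FS}}(\varphi(x),\varphi(y))}$ inherits global definedness, the bound $\le 1$, and the equality case (via injectivity of $\varphi$) from the known behaviour of the Fubini--Study diastasis. Either completion would be acceptable, but as written your argument establishes Condition 1 only and replaces Condition 2 by a plan whose stated form is false. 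A further caution: your ``standard assembly'' of coherent states and symbols does not by itself prove the correspondence principle on a noncompact domain; one still needs to cite a theorem converting Conditions 1 and 2 into an actual Berezin quantization, which is precisely the role of Theorem \ref{Th2} in the paper.
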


The construction in the proof of Theorem \ref{Th1} stops to work when $\C^2$ is replaced by $\C^n, n\geq 3$ and the metric $g_S$ is replaced by its natural generalization  $g_{S(n)}$ on $\tilde{\C}^n$ (see Section \ref{section6} for details). This is expressed by the following theorem.
\begin{theor}\label{Th3}
		Let  $\tilde\C^n$ be the blow-up of $\C^n$ at the origin endowed with the generalized Simanca metric $g_{S(n)}$.  For any integer $m\geq 1$ the following statements hold 
		\begin{enumerate}
			\item $(\tilde\C^n, mg_{S(n)})$ is projectively induced for any $n\geq 2$,
			\item $mg_{S(n)}$ is not balanced  for all $n \geq 3$.
		\end{enumerate}
\end{theor}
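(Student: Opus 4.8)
The plan is to reduce both statements to the explicit radial \K potential of $g_{S(n)}$, namely the natural generalization $\Phi(r)=r+\log r$ (with $r=|z|^2$) of the $n=2$ Simanca potential, together with the monomial sections of the quantizing bundle. For part (1) I would invoke Calabi's criterion for radial metrics: writing the Hermitian metric on $L^m$ as $h_m=e^{-m\Phi}$ in the standard trivialization, one has
\[
e^{m\Phi(r)}=r^{m}e^{mr}=\sum_{k\ge m}c_k\,r^{k},\qquad c_k=\frac{m^{\,k-m}}{(k-m)!}\ge 0 .
\]
Since all $c_k$ are nonnegative (indeed positive for $k\ge m$) for every $n\ge 2$, the map $z\mapsto[\dots:\sqrt{c_{|\alpha|}\binom{|\alpha|}{\alpha}}\,z^\alpha:\dots]$, running over the sections $z^\alpha$ of $L^m$ with $|\alpha|\ge m$ (those vanishing to order $m$ along the exceptional divisor), is a well defined \K immersion of $(\tilde\C^n,mg_{S(n)})$ into $\CP^\infty$. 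This proves (1) uniformly in $n$.

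For part (2) I would compute the $\epsilon$-function explicitly. By $\U(n)$-invariance the monomials $z^\alpha$ are orthogonal, and a Dirichlet-integral computation gives $\norm{z^\alpha}^2=\mathrm{const}\cdot\frac{\alpha!}{(|\alpha|+n-1)!}\,I_{|\alpha|}$, where the radial moments are
\[
I_k=\int_0^\infty s^{\,k+n-1}\,e^{-m\Phi(s)}\,W(s)\,ds,\qquad W(s)=\Big(1+\tfrac1s\Big)^{n-1},
\]
with $W$ the radial density of $\omega_{S(n)}^n/n!$. Summing via the multinomial identity $\sum_{|\alpha|=k}|z^\alpha|^2/\alpha!=r^k/k!$ yields
\[
\epsilon_{mg_{S(n)}}(r)=\mathrm{const}\cdot e^{-m\Phi(r)}\sum_{k\ge m}\frac{(k+n-1)!}{k!\,I_k}\,r^{k}.
\]
Comparing with the expansion of $e^{m\Phi}$ above, $mg_{S(n)}$ is balanced (equal to $m^n$) if and only if $I_k\,c_k=\mathrm{const}\cdot\frac{(k+n-1)!}{k!}$ for every $k$.

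The heart of the matter is then the evaluation of $I_k$. Using $e^{-m\Phi(s)}=s^{-m}e^{-ms}$ and expanding $W$ binomially, I obtain
\[
I_k\,c_k=\frac1m\sum_{j=0}^{n-1}\binom{n-1}{j}m^{-j}\,\frac{(k-m+j)!}{(k-m)!},
\]
which, after scaling by $m^n$, is a monic polynomial of degree $n-1$ in $\ell:=k-m$. For $n=2$ it collapses to $I_k c_k=(k+1)/m^2$, proportional to $(k+n-1)!/k!=k+1$, so the balancedness condition holds and $\epsilon_{mg_{S(2)}}=m^2$, recovering Theorem \ref{Th1}. For $n\ge 3$ I would instead show that $m^n I_k c_k$ is \emph{not} proportional to $(k+1)(k+2)\cdots(k+n-1)$: both are monic of degree $n-1$ in $\ell$, so proportionality would force equality, yet evaluating at $\ell=0$ gives $(n-1)!\sum_{i=0}^{n-1}m^i/i!$ on one side and $\prod_{i=1}^{n-1}(m+i)=(m+n-1)!/m!$ on the other, and these differ for every $n\ge 3$ (for instance by $m$ when $n=3$). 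Hence the balancedness condition fails for some $k$, so $\epsilon_{mg_{S(n)}}$ is a nonconstant function of $r$ and $mg_{S(n)}$ is not balanced, for all $m\ge 1$ and all $n\ge 3$.

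The main obstacle is exactly this final mismatch. For $n=2$ the factor $W=1+1/s$ produces the two-term telescoping that aligns the moments $I_k$ with the coefficients $c_k$; for $n\ge 3$ the additional binomial terms of $W=(1+1/s)^{n-1}$ perturb the lower-order part of $I_k c_k$ without changing its leading behaviour, and pinning down that this perturbation never vanishes is the delicate point. I would emphasize that this direct computation gives the stronger conclusion valid for \emph{all} $m\ge 1$; the weaker statement for large $m$ alone would already follow from the cited obstruction to regular quantizations, since $g_{S(n)}$ fails to be cscK for $n\ge 3$ (its scalar curvature is a nonzero multiple of $(1+r)^{-2}$, hence nonconstant).
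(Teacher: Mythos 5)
Your proposal is correct, and for part (1) it is essentially the paper's own argument: the paper exhibits the same monomial map (for $m=1$, with coefficients $\sqrt{(j_1+\cdots+j_n)/(j_1!\cdots j_n!)}$, which are exactly your $c_{|\alpha|}\binom{|\alpha|}{\alpha}$ at $m=1$) and then invokes Calabi's theory to pass to $mg_{S(n)}$. The one point you gloss over is the extension across the exceptional divisor: your map is written only on $\C^n\setminus\{0\}$, and to conclude that $(\tilde\C^n,mg_{S(n)})$ itself, not just a dense open subset, is projectively induced you must either cite Calabi's extension theorem as the paper does (using that $\tilde\C^n$ is simply connected), or note directly that in a blow-up chart, say $(w_1,w_2)\mapsto(w_1,w_1w_2)$ for $n=2$, every component $z_1^jz_2^k=w_1^{j+k}w_2^k$ is divisible by $w_1^m$, and after dividing by $w_1^m$ the component with $(j,k)=(m,0)$ is a nonvanishing constant, so the map extends holomorphically across $H$. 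This is a small but genuine step to add.

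For part (2) your route shares the paper's skeleton (same orthogonal monomial system, same weighted norms from the density $(1+1/|z|^2)^{n-1}$, same reduction of balancedness to a $k$-independence condition) but differs genuinely in the decisive step. The paper evaluates the radial integrals in closed form via the confluent hypergeometric function, obtaining $\|z_1^{j_1}\cdots z_n^{j_n}\|^2_{h_m}=m^{m-n-J}\frac{j_1!\cdots j_n!\,\Gamma(J-m+n)}{\Gamma(J+n)}\,{}_1F_1(1-n,1+m-n-J,m)$, and rules out $k$-independence by the large-$k$ asymptotics $1+\frac{m(n-1)(n-2)}{2k^2}+O(k^{-3})$. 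You instead expand $(1+1/s)^{n-1}$ binomially — which just rewrites that terminating ${}_1F_1$ as your finite sum — and compare two monic degree-$(n-1)$ polynomials in $\ell=k-m$, getting an exact contradiction at $\ell=0$. Both yield the full statement (failure for every $m\ge 1$ and $n\ge 3$); yours is more elementary, avoiding special-function asymptotics, while the paper's expansion pinpoints where the obstruction sits (your two polynomials in fact also agree in degree $n-2$, so the discrepancy lives in degree $n-3$, matching the paper's $k^{-2}$ coefficient and your constant discrepancy $m$ for $n=3$). One step needs tightening: you must verify $(n-1)!\sum_{i=0}^{n-1}m^i/i!\neq (m+n-1)!/m!$ for \emph{every} $m\ge1$ and $n\ge3$, not just $n=3$. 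This follows by comparing coefficients of $m^i$: on the left the coefficient is $(i+1)(i+2)\cdots(n-1)$, which is a single summand of the elementary symmetric polynomial $e_{n-1-i}(1,\dots,n-1)$ appearing on the right, and for $0<i<n-1$ that symmetric polynomial has additional positive summands, so the right side strictly dominates for all $m>0$ once $n\ge3$. Your closing observation — that failure of cscK (computed in Remark \ref{R1} of the paper) already forbids a regular quantization — is correct but, as you note, only gives non-balancedness for infinitely many $m$, not for all $m$.
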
	
That theorem gives an example of projectively induced \K metric $g$ on the blow up of $\C^n$ at the origin such that $mg$ is not balanced for any positive integer $m$.

The paper is organized as follows. Section \ref{section2} contains  basic facts on the Simanca metric and the proof of Theorem \ref{Th1}. In  Section \ref{section5}, after recalling the definition of  Berezin quantization, we prove  Corollary \ref{Cor1}. 
Finally in Section \ref{section6}, after describing  the  well-known link between balanced  and projectively induced metrics, we prove Theorem \ref{Th3}. \\

The authors would like to thank the referee for very helpful remarks  
and also  Michela Zedda for useful discussions.

\section{The Simanca metric and the proof of Theorem \ref{Th1}}\label{section2}
Let us briefly recall the definition of the  blow-up  $\tilde\C^2$ of $\C^2$ at the origin as 
\[
\tilde\C^2 = \{ (z_1, z_2, [t_1, t_2]) \in \C^2 \times \mathbb{C}P^{1} \ : \ t_1z_{2}-t_2z_{1} = 0\}.
\]

$\tilde\C^2$ is a closed submanifold of $\C^2 \times \mathbb{C}P^{1}$ of complex dimension $2$. 	
A system of charts for $\tilde\C^2$ is given as follows: for  $j =1,2$ we take 
\[
\tilde U_j = (\C^2 \times U_j) \cap \tilde{\C}^2,
\]
where $U_j=\{t_j \neq 0\}$, for $j =1, 2$,  are open subsets of $\mathbb{C}P^{1}$. Then we have two coordinate maps
\begin{equation*}
\begin{split}
\varphi_1 & : \tilde{U}_1 \rightarrow \mathbb{C}^2,\, \left(z_1,z_2,[t_1,t_2]\right) \mapsto \left(z_1,\frac{t_2}{t_1}\right), \\
\varphi_2 & : \tilde{U}_2 \rightarrow \mathbb{C}^2,\, \left(z_1,z_2,[t_1,t_2]\right) \mapsto \left(\frac{t_1}{t_2},z_2\right),
\end{split}
\end{equation*}
having as inverses the parametrization maps defined, respectively, by
\begin{equation}\label{chartsBUmap2}
\begin{split}
\varphi_1^{-1} & : \mathbb{C}^2 \rightarrow \tilde{U}_1,\, (w_1,w_2) \mapsto (w_1,w_1w_2,[1,w_2]), \\
\varphi_2^{-1} & : \mathbb{C}^2 \rightarrow \tilde{U}_2,\, (w_1,w_2) \mapsto (w_1w_2,w_2,[w_1,1]).
\end{split}
\end{equation}

There are two projection maps
\begin{align*}
p_1 & : \tilde \C^2 \rightarrow \C^2, \\
p_2 & : \tilde \C^2 \rightarrow \mathbb{C}P^{1},
\end{align*}
given by the restriction to $\tilde \C^2$ of the canonical projections of $\C^2 \times \mathbb{C}P^{1}$. One can prove (see \cite{McDuff}) that $p_2$ induces on  $\tilde{\C}^2$ the structure of complex line bundle, whose fibre over $[t_1,t_2] \in \mathbb{C}P^{1}$ is the corresponding line $\{(\lambda t_1, \lambda t_2)\, |\, \lambda \in \C\}$ in $\C^2$. In other words, this is the universal line bundle over $\mathbb{C}P^{1}$. Observe that $p_1$ is bijective when restricted to $p_1^{-1}(\C^2 \setminus \{0\})$, while
\[
p_1^{-1}(0) = \{(0, [t]) \in \tilde \C^2  \} \simeq \mathbb{C}P^{1}.
\]
Thus we may think of $\tilde\C^2$ as obtained from $\C^2$ by replacing the origin $0$ by the space of all lines in $\C^2$ through $0$. The manifold $p_1^{-1}(0)$ is called the \textit{exceptional divisor}, and we will denote it by $H$. So, the restriction 
\[
p_r := {p_1}_{|\tilde \C^2  \setminus H}: \tilde \C^2 \setminus H \rightarrow \C^2 \setminus \{ 0 \}, \ \ (z, [t]) \mapsto z
\]
is a biholomorphism, having as inverse
\[
\C^2 \setminus \{ 0 \} \rightarrow \tilde \C^2  \setminus H, \ \ z \mapsto \left( z_1,z_2, \left[z_1,z_2\right] \right).
\]
Take now on $\C^2 \setminus \{0\}$ the $(1,1)$-form given by 
\begin{equation}
\omega = \frac{i}{2\pi} \partial \bar \partial(|z|^2+ \log|z|^2).
\label{AWS}
\end{equation}
We claim that the pull-back $p_r^*(\omega)$ of $\omega$, a priori defined only on $\tilde\C^2 \setminus H$, extends in fact to all $\tilde\C^2$.

The pull-back $p_r^*(\omega)$ is given in the coordinates (\ref{chartsBUmap2}) by
\[
p_r^*(\omega) = \frac{i}{2\pi}\partial\bar{\partial}\left( |z_1|^2(1+|z_2|^2)+\log(1+|z_2|^2)\right),
\]
on $\tilde{U}_1 \setminus H$, and
\[
p_r^*(\omega) =  \frac{i}{2\pi}\partial\bar{\partial}\left( |z_2|^2(1+|z_1|^2)+\log(1+|z_1|^2)\right),
\]
on  $\tilde{U}_2 \setminus H$. This shows that $p_r^*(\omega)$ extends to the whole $\tilde\C^2$, as claimed.  Clearly on $\tilde\C^2 \setminus H$ this form is given in local coordinates by \eqref{AWS}. The metric associated to \eqref{AWS} was introduced for the first time by S. Simanca in \cite{Simanca} and it is known in literature as the Simanca metric and denoted here by $g_S$. The form \eqref{AWS} is denoted here by $\omega_S$. It is not hard to see that $g_S$ is a complete, zero constant scalar curvature \K metric (\cite{Simanca}). Moreover $(\tilde\C^2,\omega_S)$ is a non homogeneous manifold (see, e.g. \cite{LoiZe}). 	
Notice also  that $\tilde \C^2$ is  \emph{non contractible} but simply-connected since $\tilde\C^2$ is diffeomorphic to the connected sum
$\C^2 \# \overline{\mathbb{C}P^2}$,
where $\overline{\mathbb{C}P^2}$ is the complex projective space ${\mathbb{C}P^2}$ with the opposite orientation.

\vskip 0.3cm 
In order to prove Theorem \ref{Th1} consider the holomorphic line bundle $L \rightarrow \tilde\C^2$ such that $c_1(L)=[\omega_S]$, where $c_1(L)$ is the first Chern class of $L$.
Such line bundle exists since $\omega_S$ is integral and it is unique, up to isomorphisms of line bundle, since $\tilde\C^2$ is simply-connected.
Notice that this line  bundle $L$ is not trivial and the \K form $\omega_S$ associated to $g_S$ does not admit a global \K potential in contrast with the  example of  Bi-Feng-Tu discussed in the introduction.

One can easily verify that the holomorphic line bundle $L^m  \rightarrow \tilde\C^2$ equipped with the hermitian structure  
\[
h_m(\sigma(x),\sigma(x))= \frac{1}{|z|^{2m}}e^{-m|z|^2}|q|^2,
\]
defines a geometric quantization of $(\tilde\C^2,m\omega_S)$, where $m$ is a positive natural number and $\sigma: U \subset \tilde{\C}^2 \setminus H  \rightarrow L^m\setminus \{0\}, x \mapsto (z,q) \in U \times \C$ is a trivialising holomorphic section. Since $L^m_{|\C^2\setminus \{0\}}$ is equivalent to the trivial bundle $\C^2\setminus \{0\} \times \C$, one can find  a natural bijection between the complex space $H^0(L^m)$ and the space of holomorphic functions on $\C^2$ vanishing at the origin with order greater or equal than $m$ 
(see, e.g.  \cite[Chapter 1]{GH}). This bijection takes $s\in H^0(L^m)$ to the holomorphic function $f_s$ on $\C^2$ obtained by restricting $s$ to $\tilde \C^2  \setminus H \simeq \C^2 \setminus \{0\}$. Moreover, since $H$ has zero measure in $\tilde\C^2$, one gets 
\begin{equation}\label{intnorm2}
\begin{split}
\left\langle s,s \right\rangle_{h_m} & = \int_{\tilde \C^2 } h_m(s(x),s(x))\frac{\omega_S^2}{2!} =\\
& = \int_{\C^2 \setminus \{0\}} \frac{e^{-m|z|^2}}{|z|^{2m}}|f_s(z)|^2 \left(1+\frac{1}{|z|^2} \right) d\mu (z)<\infty,
\end{split}
\end{equation}
where $d\mu (z)= \left(\frac{i}{2\pi} \right)^2 dz_1 \wedge d\bar{z}_1\wedge dz_2 \wedge d\bar{z}_2$. Therefore, in this case, the inclusion $\mathcal{H}_m\subseteq H^0(L^m)$ is indeed an equality, namely $\mathcal{H}_m=H^0(L^m)$
\footnote{Here $\mathcal{H}_m$ (as in  the introduction) denotes the space of global holomorphic sections $s$ of $L^m$, which are bounded with respect to
\[
\left\langle s,s \right\rangle_{h_m}=||s||^2_{h_m} = \int_{\tilde{\C}^2} h_m(s(x),s(x))\frac{\omega_S^2}{2!}.
\]}.
\newline

We are now ready to prove Theorem \ref{Th1}.
\begin{proof}[Proof of Theorem \ref{Th1}]
By passing to polar coordinates  $z_1=\rho_1 e^{i\vartheta_1}, z_2=\rho_2 e^{i\vartheta_2}$ with $\rho_1,\rho_2\in (0,+\infty), \vartheta_1,\vartheta_2 \in (0,2\pi)$  one  easily sees that the monomials $\{z_1^{j}z_2^{k}\}_{j+k \geq m}$ form a complete  orthogonal system for the Hilbert space $(\Hm,\left\langle \cdot, \cdot \right\rangle_{h_m})$.
Moreover, by (\ref{intnorm2}), 
\[
||z_1^jz_2^k||^2_{h_m}= 4\int_0^{+\infty}\int_0^{+\infty}\frac{e^{-m(\rho_1^2+\rho_2^2)}}{(\rho_1^{2}+\rho_2^2)^{m+1}}(1+\rho_1^2+\rho_2^2)  \rho_1^{2j}\rho_2^{2k} \rho_1\rho_2d\rho_1d\rho_2.
\]
With the substitution $\rho_1=r\cos\theta$, $\rho_2=r\sin\theta$, $0<r<+\infty$, $0<\theta <\frac{\pi}{2}$ one finds a product of one variable integral:
\[
||z_1^jz_2^k||^2_{h_m} = 4 \int_0^{\frac{\pi}{2}}(\cos\theta )^{2j+1}(\sin\theta)^{2k+1}d\theta\cdot
\int_0^{+\infty} r^{2(j+k-m)+1} (1+r^2)e^{-mr^2}\,dr.
\]
For the first integral we have (see \cite[page 255 (6.1.1)]{AbrSteg})
\begin{equation}
\int_0^{\frac{\pi}{2}}(\cos\theta )^{2j+1}(\sin\theta)^{2k+1}d\theta=\frac{\Gamma (j+1)\Gamma (k+1)}{2\Gamma (j+k+2)}=
\frac{j!k!}{2(j+k+1)!}.
\label{F1}
\end{equation}
For the second intergral, by (see \cite[page 258 (6.2.1)]{AbrSteg})
\begin{equation}
\int_0^{\infty}r^se^{-mr^2}\,dr=\frac{\Gamma (\frac{s+1}{2})}{2m^{(\frac{s+1}{2})}},
\label{F2}
\end{equation}
we find
\begin{align*}
\int_0^{\infty} r^{2(j+k-m)+1} (1+r^2)e^{-mr^2}\,dr & =\frac{\Gamma (j+k-m+1)}{2m^{j+k-m+1}}+\frac{\Gamma (j+k-m+2)}{2m^{j+k-m+2}}= \\
& =\frac{(j+k-m)!(j+k+1)}{2m^{j+k-m+2}}.
\end{align*}
Hence one gets
\begin{equation} 
||z_1^jz_2^k||^2_{h_m}=4\left[\frac{j!k!}{2(j+k+1)!}\right]\cdot \left[\frac{(j+k-m)!(j+k+1)}{2m^{j+k-m+2}}\right]=\frac{j!k!}{(j+k)!}\frac{(j+k-m)!}{m^{j+k-m+2}}.
\end{equation}
Therefore
\[
\left\lbrace \frac{z_1^jz_2^k}{||z_1^jz_2^k||_{h_m}} \right\rbrace_{j+k \geq m} 
\]
is an orthonormal basis for the Hilber Space $(\Hm,\left\langle \cdot, \cdot \right\rangle_{h_m})$. 

For the epsilon function one obtains
\begin{align*}
\epsilon_{mg} (z) & =\sum_{\substack{j, k\geq 0{}\\j+k\geq m}}\frac{e^{-m(|z_1|^2+|z_2|^2)}}{(|z_1|^2+|z_2|^2)^m}\frac{|z_1|^{2j}|z_2|^{2k}}{||z_1^jz_2^k||^2_{h_m}}= \\
& = \frac{e^{-m(|z_1|^2+|z_2|^2)}}{(|z_1|^2+|z_2|^2)^m}\sum_{\substack{j, k\geq 0{}\\j+k\geq m}}\frac{(j+k)!|z_1|^{2j}|z_2|^{2k}}{j!k! (j+k-m)!}m^{j+k-m+2}= \\
& =\frac{e^{-m(|z_1|^2+|z_2|^2)}}{(|z_1|^2+|z_2|^2)^m}\sum_{\beta =m}^{\infty}\left[\sum_{\substack{j, k\geq 0{}\\ j+k= \beta}}\frac{(j+k)!|z_1|^{2j}|z_2|^{2k}}{j!k!}\right]\frac{m^{\beta-m+2}}{(\beta-m)!} = \\
& =\frac{e^{-m(|z_1|^2+|z_2|^2)}}{(|z_1|^2+|z_2|^2)^m}\sum_{\beta =m}^{\infty} (|z_1|^2+|z_2|^2)^{\beta}\frac{m^{\beta-m+2}}{(\beta-m)!}= \\
&=m^2e^{-m(|z_1|^2+|z_2|^2)}\sum_{\beta =m}^{\infty} (|z_1|^2+|z_2|^2)^{\beta -m}\frac{m^{\beta-m}}{(\beta-m)!}= \\
& = m^2e^{-m(|z_1|^2+|z_2|^2)}\sum_{\alpha =0}^{\infty} (|z_1|^2+|z_2|^2)^{\alpha}\frac{m^{\alpha}}{\alpha!}= \\
& = m^2,
\end{align*} 
and this proves the theorem.
\end{proof}

\section{Berezin quantization and the proof of Corollary \ref{Cor1}}\label{section5}
Let $(M,\omega)$ be a symplectic manifold and let $\{\cdot,\cdot\}$ be the associated Poisson bracket. A \textit{Berezin quantization} (we refer to \cite{Berezin} for details) on $M$ is given by a family of associative algebras $\mathcal{A}_{\hbar}$ where the parameter $\hbar$ (which plays the role of the Planck constant) ranges over a set $E$
of positive reals with limit point $0$. Then in the direct sum $\oplus_{\hbar \in E}\mathcal{A}_{\hbar}$ with component-wise
product $*$, there exists a subalgebra $\mathcal{A}$, such that for an arbitrary element $f = f (\hbar) \in A$,
where $f (\hbar) \in \mathcal{A}_{\hbar}$, there exists a limit $\lim_{\hbar \to 0} f (\hbar) = \varphi( f ) \in C^{\infty}(M)$. The following correspondence principle must hold: for $f,g \in \mathcal{A}$
\[
\varphi(f*g)=\varphi(f)\varphi(g),\quad \varphi(\hbar^{-1}(f*g-g*f))=i\{\varphi(f),\varphi(g)\}.
\]
Moreover, for any pair of points $x_1,x_2 \in M$ there exists $f \in \mathcal{A}$ such that $\varphi(f)(x_1) \neq \varphi(f)(x_2)$.

Consider now a real analytic K\"ahler manifold $M$, with K\"ahler metric $g$ and associated
K\"ahler form $\omega$. Assume that there exists a (real analytic) global K\"ahler potential $\Phi : M \rightarrow \R$.
This function extends to a sesquianalytic function $\Phi(x, \bar{y})$ on that neighbourhood of the diagonal
in $M \times M$ such that $\Phi(x, \bar{x}) = \Phi(x)$. Consider \textit{Calabi's diastasis function} $D_g$ defined
on a neighbourhood of the diagonal in $M \times M$ by:
\[
D_g(x,y)=\Phi(x,\bar{x}) + \Phi(y,\bar{y})-\Phi(x,\bar{y})-\Phi(y,\bar{x}).
\]
By its definition we see that Calabi's diastasis function is independent from the potential
chosen which is defined up to the sum with the real part of a holomorphic function. Moreover,
it is easily seen that $D_g$ is real-valued, symmetric in $x$ and $y$ and $D_g(x, x) = 0$ (the
reader is referred to \cite{Calabi} and \cite{Loi} for more details on the diastasis function).
\begin{Exa}\label{Exa1}\rm
Let $g_{FS}$ be the Fubini-Study metric on the infinite dimensional complex projective
space $\mathbb{C}P^{\infty}$ of holomorphic sectional curvature $4$ and let $D_{g_{FS}}(p, q)$ be the associated
Calabi's diastasis function. One can show that for all $p \in \mathbb{C}P^{\infty}$ the function $D_{g_{FS}}(p, \cdot)$ is
globally defined except in the cut locus $H_p$ of $p$ where it blows up. Moreover $e^{-D_{g_{FS}}(p, q)}$ is
globally defined and smooth on $\mathbb{C}P^{\infty}$, $e^{-D_{g_{FS}}(p, q)}\leq 1$ and $e^{-D_{g_{FS}}(p, q)}=1$ if and only if $p=q$ (see \cite{Loi} for details).	
\end{Exa}
The following theorem is a reformulation of Berezin quantization result (see  \cite{Englis} and \cite{LM12}) in terms of Rawnsley $\epsilon$-function and Calabi's diastasis function.
\begin{theor}\label{Th2}
	Let $\Omega \subset \C^n$ be a complex domain equipped with a real analytic K\"ahler form $\omega$ and corresponding K\"ahler metric $g$. Then, $(\Omega,\omega)$ admits a Berezin quantization if the following two conditions are satisfied:
	\begin{enumerate}
		\item Rawnsley's function $\epsilon_{m g}(x)$ is a positive constant for all sufficiently large $m$;
		\item the function $e^{-D_{g}(x, y)}$ is globally defined on $\Omega \times \Omega$, $e^{-D_g(x, y)}\leq 1$ and $e^{-D_g(x, qy)}=1$ if and only if $x=y$.
	\end{enumerate}
\end{theor}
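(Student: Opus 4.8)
The plan is to reduce the two hypotheses to the precise analytic input required by the general Berezin construction of \cite{Englis} and \cite{LM12}, where the bulk of the hard analysis has already been carried out. First I would fix a global real analytic K\"ahler potential $\Phi$ for $\omega$ (available in the situations covered by hypothesis (2), and given explicitly by $\Phi=|z|^2+\log|z|^2$ in the intended application to $\C^2\setminus\{0\}$) and realize the quantization concretely through the weighted Bergman spaces: for each $m$ let $\mathcal H_m$ be the space of holomorphic functions $f$ on $\Omega$ with $\int_\Omega |f|^2 e^{-m\Phi}\,\omega^n/n!<\infty$, with reproducing kernel $K_m(x,\bar y)$. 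Setting $\hbar=1/m$, one takes $\mathcal A_\hbar$ to be Berezin's algebra of covariant symbols $\hat A(x)=\langle A e_x,e_x\rangle/\langle e_x,e_x\rangle$ of bounded operators $A$ on $\mathcal H_m$, where $e_x$ is the coherent state at $x$. The entire construction is governed by the normalized two-point function
\[
\theta_m(x,y)=\frac{|K_m(x,\bar y)|^2}{K_m(x,\bar x)\,K_m(y,\bar y)},
\]
so everything reduces to controlling its size.

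The key step is to express $\theta_m$ through the diastasis using the two hypotheses. In this Bergman realization one has $\epsilon_{mg}(x)=e^{-m\Phi(x)}K_m(x,\bar x)$, so hypothesis (1) gives $K_m(x,\bar x)=C\,e^{m\Phi(x)}$ for a positive constant $C=C(m)$ and all large $m$. Both sides are restrictions to the diagonal of sesquianalytic functions of $(x,\bar y)$; since hypothesis (2) guarantees that the sesquianalytic extension $\Phi(x,\bar y)$, equivalently the diastasis $D_g$, is globally defined on $\Omega\times\Omega$, uniqueness of sesquianalytic continuation upgrades the diagonal identity to the global one $K_m(x,\bar y)=C\,e^{m\Phi(x,\bar y)}$. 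Using $\overline{\Phi(x,\bar y)}=\Phi(y,\bar x)$ together with the definition of $D_g$, I would then obtain the exact relation
\[
\theta_m(x,y)=\bigl(e^{-D_g(x,y)}\bigr)^{m}.
\]

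With this identity in hand, hypothesis (2) supplies the remaining work: $e^{-D_g(x,y)}\le 1$ forces $\theta_m\le 1$, yielding boundedness of the Berezin transform and well-definedness of the symbol calculus, while $e^{-D_g(x,y)}=1$ if and only if $x=y$ forces $\theta_m(x,y)\to 0$ uniformly on compacta away from the diagonal as $m\to\infty$. This off-diagonal decay is exactly the input under which \cite{Englis} and \cite{LM12} show that the symbol algebras $\mathcal A_\hbar$ assemble into a Berezin quantization: one extracts the subalgebra $\mathcal A\subset\oplus_\hbar\mathcal A_\hbar$, the limit map $\varphi$, and verifies the correspondence principle from the semiclassical asymptotics of the Berezin product. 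The separation of points is then immediate from the equality case, since for $x_1\ne x_2$ one has $e^{-D_g(x_1,x_2)}<1$, so the covariant symbol of a suitable coherent-state projector takes distinct values at $x_1$ and $x_2$, producing the required $f\in\mathcal A$.

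The main obstacle I anticipate is the second step, namely the passage from the diagonal identity $K_m(x,\bar x)=C e^{m\Phi(x)}$ to the global off-diagonal identity $K_m(x,\bar y)=Ce^{m\Phi(x,\bar y)}$: the analytic continuation is automatic only in a neighbourhood of the diagonal, and it is precisely the global definiteness of $e^{-D_g}$ asserted in hypothesis (2) that is needed to propagate it across all of $\Omega\times\Omega$. Once the exact formula $\theta_m=(e^{-D_g})^{m}$ is secured, the remainder is a direct application of the cited Berezin machinery rather than genuinely new analysis, which is why I regard the statement as a reformulation.
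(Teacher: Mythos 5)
Your proposal is correct and takes essentially the same route as the paper: the paper gives no independent proof of Theorem \ref{Th2}, presenting it purely as a reformulation of the Berezin quantization results of Engli\v{s} and Loi--Mossa, which is exactly the machinery you defer to for the correspondence principle. Your explicit dictionary --- $\epsilon_{mg}(x)=e^{-m\Phi(x)}K_m(x,\bar x)$, whence $K_m(x,\bar y)=C\,e^{m\Phi(x,\bar y)}$ by sesquianalytic continuation and $\theta_m=\bigl(e^{-D_g}\bigr)^m$ globally --- is precisely the translation of hypotheses (1) and (2) into Engli\v{s}'s conditions that the word ``reformulation'' leaves implicit.
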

We are now in the position to prove Corollary \ref{Cor1}, namely that $(\C^2\setminus \{0\},\omega_S)$ admits a Berezin quantization.
\begin{proof}[Proof of Corollary \ref{Cor1}]
We are going to show  that Conditions $1$ and $2$ of Theorem \ref{Th2} are fulfilled  by $(\C^2\setminus \{0\},\omega_S)$. 
Condition 1 follows by  Theorem \ref{Th1}. For Condition 2 consider the holomorphic map
\[
\varphi : \C^2 \setminus \{0\} \rightarrow 	\mathbb{C}P^{\infty}
\]
given by
\[
(z_1,z_2) \mapsto \left(z_1,z_2,\ldots, \sqrt{\frac{j+k}{j!k!}}z_1^jz_2^k,\ldots\right),\, j+k \neq 0. 
\]
It is not hard to see that $\varphi$ is an injective  K\"ahler immersion from $(\C^2\setminus \{0\}, g_S)$ into $(\mathbb{C}P^{\infty}, g_{FS})$ (see  \cite{LSZ} for a proof). 
By Example \ref{Exa1},
Calabi's diastasis function $D_{g_{FS}}$ of $\C P^{\infty}$ is such that  $e^{-D_{g_{FS}}}$
is globally defined on $\C P^{\infty}\times\C P^{\infty}$ and by the hereditary property of the diastasis function
 (see \cite[Proposition  6]{Calabi}) we get that, for all $x, y\in \C^2 \setminus \{0\}$,
\begin{equation}\label{fondequ}
e^{-D_{FS}(\varphi (x), \varphi (y))}=e^{-D_{g_S}(x, y)}
\end{equation}
 is globally defined on $\C^2 \setminus \{0\}\times \C^2 \setminus \{0\}$.
 Since, by Example \ref{Exa1},   $e^{-D_{FS}(p, q)}\leq 1$ for all $p, q\in \C P^{\infty}$ it follows that $e^{-D_{g_S}(x, y)}\leq 1$ for  all
 $x, y\in  \C^2 \setminus \{0\}$ and since $\varphi$ is injective one gets that   $e^{-D_{g_S}(x, y)}=1$ iff $x=y$. Hence, also Condition 2 is satisfied
 and this concludes the proof of the corollary.
\end{proof}

\section{Balanced metrics on the blow-up of $\C^n$ at the origin and the proof of Theorem \ref{Th3}}\label{section6}
It is well known (see \cite{CahGut} and \cite{Rawnsley}) that if $mg$ is a balanced metric, namely  the function $\epsilon_{mg}$ is a positive constant, then $mg$ is projectively induced via the coherent states map 
\[
\varphi_m : M \rightarrow \C P^{d_m},\, x \mapsto [s_0(x),\ldots, s_j(x),\ldots].
\]
In fact the relation between this map and the function $\epsilon_{mg}$ can be read in the following formula due to Rawnsley (see \cite{Rawnsley}):
\begin{equation}
\varphi_m^*(\omega_{FS}) = \frac{i}{2\pi} \partial \bar{\partial} \log  \sum _{j=0}^{d_m} |s_j(x)|^2 = m \omega+ \frac{i}{2\pi} \partial \bar{\partial} \log \epsilon_{mg}.  
\label{E10}
\end{equation}
Therefore, by Theorem \ref{Th1} the metric $mg_S$ on $\tilde\C^2$ is projectively induced for all  $m>0$.
Another example of \K metric  on the blow-up $\tilde\C^2$  of $\C^2$ at the origin is the celebrated Eguchi--Hanson metric $g_{EH}$, namely the complete Ricci flat metric on 
$\tilde\C^2$ whose \K form   on  $\tilde\C^2\setminus H\cong \C^2\setminus \{0\}$ is given by  
\begin{equation}
\omega_{EH} = \frac{i}{2\pi} \partial \bar \partial(\sqrt{|z|^4+1}+\log|z|^2-\log(1+\sqrt{|z|^4+1})).
\label{WS}
\end{equation}
Recently in \cite{FCA},  the first author of the present paper shows that the metric $mg_{EH}$ is not balanced for any positive integer $m$. Hence, it is natural to see if there exist  examples of complete metrics $g$ on the blow-up of the Euclidean space at the origin such that $mg$ is projectively induced  but $mg$ is not balanced for any $m>0$. In this section we construct such a metric.

Let  $g_{S(n)}$ be the \textit{generalized Simanca metric}  on the blow-up $\tilde\C^n$ of $\C^n$ at the origin  whose \K form on $\C^n \setminus \{0\}\cong \tilde{\C}^n \setminus H$ is given by
\[
\omega_{S(n)}=\frac{i}{2\pi} \partial \bar{\partial}(|z|^2+\log |z|^2),\ \   |z|^2=|z_1|^2+\cdots+|z_n|^2
\]
and  $H$ denotes  the exceptional divisor arising by the blow-up construction (as in Section \ref{section2} one can  show that  $\omega_{S(n)}$, a priori defined only on $\C^n \setminus \{0\}$, extends to all $\tilde{\C}^n$). When $n=2$, $g_{S(2)}=g_S$. Notice that the generalized Simanca metric is complete but its scalar curvature is not constant.
\begin{remar}\label{R1} In the case of the generalized Simanca metric, for $n=3$ we have
		\[
		g_{S(3)}=\begin{pmatrix}
		1+\frac{|z_2|^2+|z_3|^2}{(|z_1|^2+|z_2|^2+|z_3|^2)} & -\frac{\bar{z}_1z_2}{(|z_1|^2+|z_2|^2+|z_3|^2)} &-\frac{\bar{z}_1z_3}{(|z_1|^2+|z_2|^2+|z_3|^2)}\\
		-\frac{z_1\bar{z}_2}{(|z_1|^2+|z_2|^2+|z_3|^2)}      &  1+\frac{|z_1|^2+|z_3|^2}{(|z_1|^2+|z_2|^2+|z_3|^2)}  & -\frac{\bar{z}_2z_3}{(|z_1|^2+|z_2|^2+|z_3|^2)} \\
		-\frac{z_1\bar{z}_3}{(|z_1|^2+|z_2|^2+|z_3|^2)}& -\frac{z_2\bar{z}_3}{(|z_1|^2+|z_2|^2+|z_3|^2)} & 1+\frac{|z_1|^2+|z_2|^2}{(|z_1|^2+|z_2|^2+|z_3|^2)}
		\end{pmatrix}
		\]
		so that, for $(z_1,0,0)$
		\[
		g_{S(3)}=\begin{pmatrix}
		1  & 0 &0 \\
		0  &  1+\frac{1}{|z_1|^2}   & 0  \\
		0  & 0  & 1+\frac{1}{|z_1|^2}
		\end{pmatrix}, \quad g_{S(3)}^{-1}=\begin{pmatrix}
		1  & 0 &0 \\
		0  & \frac{|z_1|^2}{1+|z_1|^2}   & 0  \\
		0  & 0  & \frac{|z_1|^2}{1+|z_1|^2}
		\end{pmatrix}.
		\]
		By recalling that $\Ric_{i\bar{j}}=-\frac{\partial^2 \log \det g}{\partial z_i\partial \bar{z}_j}$ one gets:
		\[
		\Ric=\begin{pmatrix}
		-\frac{1}{(1+|z_1|^2)^2} & 0 &0 \\
		0  &  \frac{1}{|z_1|^2+|z_1|^4}   & 0  \\
		0  & 0  &  \frac{1}{|z_1|^2+|z_1|^4} 
		\end{pmatrix}.
		\]
		Finally, by recalling that $\rho_g=-\sum g^{i\bar{j}}\Ric_{i\bar{j}}$, one finds:
		\[
		\rho_{g_{S(3)}}= - \frac{1}{(1+|z_1|^2)^2}.
		\]
		More generally it is possible to prove that
		\[
		\rho_{g_{S(n)}}= \frac{2-n}{(1+|z_1|^2)^2},\quad \mbox{for}\quad (z_1,0,\ldots,0)
		\]
		so the scalar curvature is not costant for any $n\geq 3$. 
\end{remar}

\begin{proof}[Proof of Theorem \ref{Th3}] 1. 
The holomorphic map
	\[
	\varphi : \C^n \setminus \{0\} \rightarrow \C P^{\infty}
	\]
	given by
	\[
	(z_1,\ldots,z_n) \mapsto \left(z_1,\ldots,z_n,\ldots, \sqrt{\frac{j_1+\cdots + j_n}{j_1!\cdots j_n!}}z_1^{j_1}\cdots z_n^{j_n},\ldots\right),\, j_1+\cdots + j_n \neq 0,
	\]
	is a \K immersion from $(\C^n \setminus \{0\},g_{S(n)})$ into $(\C P^{\infty},g_{FS})$. In point of fact 
	\begin{align*}
	\varphi^*(\omega_{FS}) & = \frac{i}{2\pi}  \partial \bar{\partial} \log \left(	\sum_{\substack{j_1, j_2,\ldots, j_n \geq 0{}\\j_1+\cdots + j_n\geq m}}\left( \frac{j_1+\cdots + j_n}{j_1!\cdots j_n!}|z_1|^{2j_1}\cdots |z_n|^{2j_n} \right)\right) = \\ 
	& = \frac{i}{2\pi} \partial \bar{\partial} \log (e^{|z|^2}|z|^2) = \omega_{S(n)}.
	\end{align*}
	Since $\tilde{\C}^n$ is simply-connected it follows from a result of Calabi (see \cite{Calabi} and \cite{LSZ}) that $\varphi$ extends to a \K immersion from $(\tilde{\C}^n,g_{S(n)})$ into $(\C P^{\infty},g_{FS})$. Similarly, one can show that $mg_{S(n)}$ is projectively induced for any positive integer $m$. Indeed, by a  result of Calabi  \cite{Calabi} if a \K manifold can be \K immersed into $\C P^{\infty}$ then the same is true for $(M, mg)$.

	2. For  an integer $m>0$, consider the  geometric quantization given by the holomorphic line bundle  $L^m \rightarrow (\tilde{\C}^n,\omega_{S(n)})$ such that $c_1(L^m)=m[\omega_{S(n)}]$, equipped with the hermitian structure  
	\[
	h_m(\sigma(x),\sigma(x))= \frac{1}{|z|^{2m}}e^{-m|z|^2}|q|^2.
	\]
	where  $\sigma: U \subset \tilde{\C}^n\setminus H \rightarrow L^m\setminus \{0\}, x \mapsto (z,q) \in U \times \C$ is a trivialising holomorphic section. As for the Simanca metric  there is a natural bijection between the complex  space $H^0(L^m)$ of global holomorphic sections  and the space of  holomorphic functions on $\C^n$ vanishing at the origin with order greater or equal than $m$.
	This bijection takes $s\in H^0(L^m)$ to the holomorphic function $f_s$ on $\C^n$ obtained by restricting $s$ to $\tilde{\C}^n \setminus H \simeq \C^n \setminus \{0\}$. Moreover, since $H$ has zero measure in $\tilde{\C}^n$, one gets 
	
	\begin{equation}\label{intnorm}
	\begin{split}
	\left\langle s,s \right\rangle_{h_m} & = \int_{\tilde \C^n } h_m(s(x),s(x))\frac{\omega_{S(n)}^n}{n!} =\\
	& = \int_{\C^n \setminus \{0\}} \frac{e^{-m|z|^2}}{|z|^{2m}}|f_s(z)|^2 \left(1+\frac{1}{|z|^2} \right)^{n-1} d\mu (z)<\infty,
	\end{split}
	\end{equation}
	where $d\mu (z)= \left(\frac{i}{2\pi} \right)^n dz_1 \wedge d\bar{z}_1\wedge dz_2 \wedge d\bar{z}_2 \wedge \ldots \wedge dz_n \wedge d\bar{z}_n$. Therefore $\mathcal{H}_m=H^0(L^m)$. 
	
From \eqref{intnorm} by passing to polar coordinates  $z_1=\rho_1 e^{i\vartheta_1},\ldots, z_n=\rho_n e^{i\vartheta_n}$ with $\rho_1,\ldots \rho_n \in (0,+\infty), \vartheta_1,\ldots,\vartheta_n \in (0,2\pi)$  one  easily sees that the set
    \[
    \{z_1^{j_1}\cdots z_n^{j_n}\}, \quad j_1+\cdots + j_n \geq m
    \]
    form a complete  orthogonal system for the Hilbert space $(\Hm,\left\langle \cdot, \cdot \right\rangle_{h_m})$.  Moreover, by (\ref{intnorm}), 
   \[
   ||z_1^{j_1}\cdots z_n^{j_n}||^2_{h_m}= 2^n \int_{\Omega}\frac{e^{-m(\rho_1^2+\cdots + \rho_n^2)}}{(\rho_1^2+\cdots + \rho_n^2)^{m+n-1}}(1+\rho_1^2+\cdots + \rho_n^2)^{n-1}  \rho_1^{2j_1+1}\cdots \rho_n^{2j_n+1}\,d\rho_1\cdots d\rho_n,
   \]
   where $\Omega = \{(0,+\infty)^n \subset \R^n\}$. With the substitution 
   \begin{align*}
   & \rho_1    =  r \cos(\vartheta_1) \\
   & \rho_2   =   r \sin(\vartheta_1)\cos(\vartheta_2) \\
   & \rho_3  =  r \sin(\vartheta_1)\sin(\vartheta_2)\cos(\vartheta_3) \\
   &  \vdots   \\
   & \rho_{n-1}   =  r \sin(\vartheta_1)\cdots \sin(\vartheta_{n-2})\cos(\vartheta_{n-1}) \\
   & \rho_{n}   =   r \sin(\vartheta_1)\cdots \sin(\vartheta_{n-2})\sin(\vartheta_{n-1}) 
   \end{align*}
   with $0< r < +\infty,\, 0 < \vartheta_i < \frac{\pi}{2}$ for $i=1,\ldots,n-1$, one finds a product of one variable integrals:
   \begin{align*}
   ||z_1^{j_1}\cdots z_n^{j_n}||^2_{h_m}  =  & 2^n \int_0^{\frac{\pi}{2}}(\cos\theta_1 )^{2j_1+1}(\sin\theta_1)^{2(j_2+\cdots +j_n+(n-1)-1)+1}d\theta_1 \cdot \\
   & \cdot \int_0^{\frac{\pi}{2}}(\cos\theta_2 )^{2j_2+1}(\sin\theta_2)^{2(j_3+\cdots +j_n+(n-1)-2)+1}d\theta_2 \cdot \\
   & \cdot \int_0^{\frac{\pi}{2}}(\cos\theta_3 )^{2j_3+1}(\sin\theta_3)^{2(j_4+\cdots +j_n+(n-1)-3)+1}d\theta_3 \cdot \\
   & \vdots \\
   & \cdot \int_0^{\frac{\pi}{2}}(\cos\theta_{n-2} )^{2j_{n-2}+1}(\sin\theta_{n-2})^{2(j_{n-1} +j_n+(n-1)-(n-2))+1}d\theta_{n-2} \cdot \\
   & \cdot \int_0^{\frac{\pi}{2}}(\cos\theta_{n-1} )^{2j_{n-1}+1}(\sin\theta_{n-1})^{2(j_n+(n-1)-(n-1))+1}d\theta_{n-1} \cdot \\
   & \cdot \int_0^{+\infty} r^{2(j_1+\cdots + j_n-m)+1} (1+r^2)^{n-1}e^{-mr^2}\,dr.
   \end{align*}
   For the first $n-1$ integrals, by \eqref{F1} we find
   \begin{align*}
   & \int_0^{\frac{\pi}{2}}(\cos\theta_1 )^{2j_1+1}(\sin\theta_1)^{2(j_2+\cdots +j_n+(n-1)-1)+1}d\theta_1  = 
   \frac{j_1!(j_2+\cdots +j_n+n-2)!}{2(j_1+j_2+\cdots +j_n+n-1)!},\\
   & \int_0^{\frac{\pi}{2}}(\cos\theta_2 )^{2j_2+1}(\sin\theta_2)^{2(j_3+\cdots +j_n+(n-1)-2)+1}d\theta_2 = \frac{j_2!(j_3+\cdots +j_n+n-3)!}{2(j_2+j_3+\cdots +j_n+n-2)!}, \\
   & \vdots \\
   & \int_0^{\frac{\pi}{2}}(\cos\theta_{n-2} )^{2j_{n-2}+1}(\sin\theta_{n-2})^{2(j_{n-1} +j_n+(n-1)-(n-2))+1}d\theta_{n-2} = \frac{j_{n-2}!(j_{n-1}+j_n+1)!}{2(j_{n-2}+j_{n-1}+j_n+2)!},\\
   &  \int_0^{\frac{\pi}{2}}(\cos\theta_{n-1} )^{2j_{n-1}+1}(\sin\theta_{n-1})^{2(j_n+(n-1)-(n-1))+1}d\theta_{n-1} = \frac{j_{n-1}!j_n!}{2(j_{n-1}+j_{n}+1)!}.
   \end{align*}
   For the last integral we find
   \[
    \int_0^{+\infty} r^{2(j_1+\cdots + j_n-m)+1} (1+r^2)^{n-1}e^{-mr^2}\,dr  = \frac{(J-m)!}{2} U(J-m+1, J-m+n+1,m),
   \]
    where $J=j_1+\cdots + j_n$ and $U(a,b,z)=\frac{1}{\Gamma(a)}\int_0^{\infty}e^{-zt}t^{a-1}(1+t)^{b-a-1}\,dt$ is the Confluent Hypergeometric Function of the second kind (see \cite[page 504]{AbrSteg}).  Since
    \[
    U(a,b,z)=\frac{\Gamma(1-b)}{\Gamma(a+1-b)} {}_1F_1(a,b,z)+ \frac{\Gamma(b-1)}{\Gamma(a)}z^{1-b} {}_1F_1(a+1-b,2-b,z),
    \]
    where $_1F_1(a,b,z)$ is the {\em Confluent Hypergeometric Function}, one gets
   \begin{equation}
   ||z_1^{j_1}\cdots z_n^{j_n}||^2_{h_m} = m^{m-n-J}\frac{j_1!\cdots j_n! \Gamma(J-m+n)}{\Gamma(J+n)}{}_1F_1(1-n,1+m-n-J,m).
   \label{E0}
   \end{equation}
   Therefore
   \[
   \left\lbrace \frac{z_1^{j_1}\cdots z_n^{j_n}}{||z_1^{j_1}\cdots z_n^{j_n}||_{h_m}} \right\rbrace_{j_1+\cdots + j_m \geq m} 
   \]
   is a orthonormal basis for the Hilber Space $(\Hm,\left\langle \cdot, \cdot \right\rangle_{h_m})$. For the function epsilon (setting $J=m+k,\, k \in \N$) follows that
  \begin{equation}
  \epsilon_{mg_{S(n)}}(z)= \frac{e^{-mt}}{t^m} \sum_{k=0}^{\infty} \frac{m^{k+n}t^{k+m}(k+m+n-1)!}{(k+m)!(k+n-1)!\,{}_1F_1(1-n,1-k-n,m)},
  \label{E11}
  \end{equation}
   where $t:=|z|^2$. For $n=2$, ${}_1F_1(-1,-1-k,m)=\frac{k+m+1}{k+1}$ and \eqref{E11} simplifies to $m^2$, in agreement with the computations in Section \ref{section2}. In general, the right-hand side is, in terms of the variable $x:=mt$, equal to
 \begin{equation*}
  e^{-x}m^n \sum_{k=0}^{\infty} \frac{x^k(k+m+n-1)!}{(k+m)!(k+n-1)!\,{}_1F_1(1-n,1-k-n,m)},
  \label{E12}
  \end{equation*}
  which is independent of $x$ only if
  \begin{equation*}
  \frac{k!(k+m+n-1)!}{(k+m)!(k+n-1)!\,{}_1F_1(1-n,1-k-n,m)}
  \label{E13}
\end{equation*}
  is independent of $k$. Looking at the asymptotics as $k \rightarrow + \infty$, the last expression behaves as
  \begin{equation*}
  1 + \frac{m(n-1)(n-2)}{2k^2} + O\left(\frac{1}{k^3}\right) ,
  \label{E14}
\end{equation*}
  so it can be independent of $k$ only for $n \in \{1,2\}$.
\end{proof}

\end{document}